\documentclass[12pt,reqno]{amsart}

\usepackage[T1]{fontenc}
\usepackage[utf8]{inputenc}
\usepackage{lmodern}
\usepackage{microtype}
\usepackage{amsmath,amssymb,amsthm,mathtools}
\usepackage{enumitem}
\usepackage[hidelinks]{hyperref}
\usepackage[a4paper,margin=29mm]{geometry}
\newcommand{\aTriv}{\mathrm{aTriv}}
\newcommand{\Triv}{\mathrm{Triv}}

\setlist{nosep,leftmargin=*}

\newtheorem{theorem}{Theorem}[section]
\newtheorem{proposition}[theorem]{Proposition}
\newtheorem{lemma}[theorem]{Lemma}
\newtheorem{corollary}[theorem]{Corollary}
\newtheorem*{theoremA}{Theorem A}
\theoremstyle{definition}

\theoremstyle{remark}

\DeclareMathOperator{\Aut}{Aut}
\DeclareMathOperator{\Inn}{Inn}
\DeclareMathOperator{\Out}{Out}
\DeclareMathOperator{\Img}{Im}

\title[Finite left-simple skew braces]{Classification of finite left-simple skew braces}
\author{Marco Damele}
\address{Dipartimento di Matematica e Informatica, Universit\`a degli Studi di Cagliari, Italy}
\email{marco.damele@unica.it}
\date{}

\subjclass[2020]{20N99, 20D05, 12F10}
\keywords{skew brace, left ideal, left-simple skew brace, characteristically simple group, finite simple group, Hopf--Galois structure}

\begin{document}

\begin{abstract}
We give a complete classification of finite left-simple skew braces. We prove that a non-trivial finite skew brace has no non-trivial proper left ideals if and only if it is either the trivial skew brace of prime order or the almost trivial skew brace associated with a finite non-abelian simple group. 
\end{abstract}

\maketitle

\section{Introduction}

Braces were introduced by Rump \cite{Rump} as an algebraic framework for the study of non-degenerate involutive set-theoretic solutions of the Yang--Baxter equation, and skew braces were subsequently introduced by Guarnieri and Vendramin \cite{GuarnieriVendramin} in order to encompass the non-involutive setting. A skew brace is a triple $(A,\cdot,\circ)$ such that $(A,\cdot)$ and $(A,\circ)$ are groups satisfying
\[
 a\circ(b\cdot c)=(a\circ b)\cdot a^{-1}\cdot(a\circ c)
\]
for all $a,b,c \in A$, where $a^{-1}$ denotes the inverse of $a$ in $(A,\cdot)$. The class of skew braces is considerably broader than the class of groups, since the same underlying set carries two, in general different, group structures. Nevertheless, every group gives rise to two basic examples. If $(G,\cdot)$ is a group, then $(G,\cdot,\cdot)$ is a skew brace, called the \emph{trivial skew brace} on $G$ and denoted by $\Triv(G)$. Likewise, if $\cdot^{\mathrm{opp}}$ denotes the opposite operation on $G$, then $(G,\cdot,\cdot^{\mathrm{opp}})$ is a skew brace, called the \emph{almost trivial skew brace} on $G$ and denoted by $\aTriv(G)$.
 Besides their original connection with the Yang--Baxter equation, skew braces are naturally equivalent to bijective $1$-cocycles and to regular subgroups of holomorphs, and they have close connections with factorisations of groups and Hopf--Galois structures; see, for instance, \cite{GuarnieriVendramin,SmoktunowiczVendramin,StefanelloTrappeniers}.

A natural problem in the theory of finite skew braces is to study how restrictions on their substructures influence the structure of the whole brace. Similar questions are classical in finite group theory, where assumptions on normal, characteristic, maximal or minimal subgroups often lead to strong structural consequences. In the setting of skew braces, one may consider several distinguished classes of substructures, such as sub-skew braces, left ideals and ideals, and investigate finite skew braces in which these substructures are subject to strong restrictions.

Among these, ideals play a particularly important role, since they are precisely the substructures that allow one to define quotient skew braces. Accordingly, a non-trivial skew brace is called \emph{simple} if it has no non-trivial proper ideals. The study of simple braces has been an important part of the subject from its early development. Bachiller constructed the first non-trivial finite simple left braces by means of matched products \cite{BachillerSimple}; asymmetric products and subsequent constructions then produced broad families of finite simple left braces \cite{BCJO,CJO}. In the genuinely skew setting, Byott constructed the first infinite families of finite simple skew braces beyond the classical examples arising from non-abelian simple groups \cite{Byott}. More recently, several works have further developed the theory of finite simple skew braces from both the structural and constructive points of view; see, for instance, \cite{DameleCyclicSylow,DameleErcan,DameleKernelWreath,DameleInverseBaer}. These results show that finite simple skew braces can exhibit a remarkably rich range of behaviour, and make stronger restrictions on distinguished substructures a natural source of rigidity.

A particularly strong restriction was considered by Ballester-Bolinches et al.~\cite{BallesterAdv}. In their study of solvability and substructure in skew braces, they proved that a non-trivial finite skew brace with no non-trivial proper sub-skew braces is necessarily the trivial skew brace of prime order. This may be viewed as a skew-brace analogue of the classical fact that a finite group with no non-trivial proper subgroups is cyclic of prime order. In the skew-brace setting, however, the interaction between the two group structures makes this rigidity phenomenon substantially less immediate.

The notion of left-simplicity was recently studied systematically by Tsang \cite{Tsang}. Recall that, for $a,x \in A$,
\[
 \lambda_a(x)=a^{-1}\cdot(a\circ x)
\]
defines an automorphism of $(A,\cdot)$ and the map $\lambda: a\mapsto\lambda_a$ is a homomorphism from $(A,\circ)$ to $\Aut(A,\cdot)$. This is called the lambda-map of $A$. A subgroup $I\le (A,\cdot)$ is a \emph{left ideal} if it is invariant under every $\lambda_a$. A non-trivial skew brace $A$ is called \emph{left-simple} if its only left ideals are $1$ and $A$. Since every characteristic subgroup of $(A,\cdot)$ is a left ideal, the additive group of a finite left-simple skew brace is characteristically simple. Hence $ (A,\cdot)\cong T^n$
for some finite simple group $T$. Tsang initiated the classification of finite left-simple skew braces. She completely settled two fundamental cases. If $T$ is cyclic of prime order, then left-simplicity forces $n=1$, and one obtains the trivial skew brace of prime order. If $T$ is non-abelian and $n=1$, then a finite skew brace with additive group $T$ is left-simple precisely when it is almost trivial.
For $T$ non-abelian simple and $n\ge2$, Tsang obtained strong necessary conditions involving the two canonical actions, their images in $\Aut(T^n)$ and the induced transitive action on the simple direct factors. The general existence problem for additive group $T^n$, $n\ge2$, remained open.

Our main theorem eliminates this remaining case and yields a complete classification.

\begin{theoremA}
Let $A$ be a finite skew brace. Then $A$ is left-simple if and only if exactly one of the following holds:
\begin{enumerate}[label=(\roman*)]
\item $A \simeq \Triv(C_p)$ for some prime $p$;
\item $A \simeq \aTriv(S)$ for a finite non-abelian simple group $S$.
\end{enumerate}
\end{theoremA}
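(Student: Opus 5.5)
The plan is to reduce Theorem A to the case left open by Tsang, and then to rule that case out. First, for the "if" direction: $\Triv(C_p)$ has no non-trivial proper subgroups at all. For $\aTriv(S)$ one checks that $\lambda_a$ is conjugation by $a^{-1}$ in $(S,\cdot)$. Hence the left ideals are exactly the normal subgroups of $S$, and $S$ is simple. The two cases are exclusive, since one additive group is abelian and the other is not.

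For the "only if" direction, let $A$ be left-simple. Characteristic subgroups of $(A,\cdot)$ are left ideals, so $(A,\cdot)\cong T^n$ with $T$ simple. Tsang's results, quoted in the introduction, settle two cases: $T$ abelian, which gives $\Triv(C_p)$, and $T$ non-abelian with $n=1$, which gives $\aTriv(T)$. So it remains to show that no left-simple skew brace has additive group $T^n$ with $T$ non-abelian simple and $n\ge 2$.

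I would organise the argument around $\Lambda=\lambda(A,\circ)\le\Aut(T^n)=\Aut(T)\wr S_n$ and the following observations.
\begin{enumerate}
\item $\ker\lambda$ is a left ideal. If it equalled $A$, then $A=\Triv(T^n)$, and every subgroup (for instance one factor $T$) would be a left ideal. Hence $\lambda$ is injective and $|\Lambda|=|T|^n$.
\item If the induced permutation action $\pi\colon\Lambda\to S_n$ on the simple factors were intransitive, the product of the factors in one orbit would be a proper $\lambda$-invariant subgroup. So $\pi(\Lambda)$ is transitive.
\item Let $K=\Lambda\cap\Aut(T)^n$. The holomorph description makes $\{(a,\lambda_a)\}$ a regular subgroup of $T^n\rtimes\Lambda$.
\end{enumerate}
I would then study the projections $K\to\Aut(T)$ onto the coordinates. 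By the Schreier conjecture, $K\cap\Inn(T)^n$ has solvable index in $K$, and $|\pi(\Lambda)|$ is bounded by the order of a transitive subgroup of $S_n$. Comparing $|T|^n$ with these bounds, I aim to show that some coordinate projection of $K$ contains $\Inn(T)$, and therefore, by transitivity, every coordinate projection does. A subdirect product of copies of $T$ is a product of diagonal subgroups over a block partition of $\{1,\dots,n\}$. Combined with Tsang's necessary conditions on the two canonical actions, this should produce a $\Lambda$-invariant "diagonal-type" or "coordinate-kernel" subgroup of $T^n$ that is proper and non-trivial. Candidates are $\prod_i C_{T}(\cdot)$ constructions, or the subgroup $\{x : \lambda$-orbit data constant on blocks$\}$. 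Such a subgroup is a left ideal, which is the desired contradiction.

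The main obstacle is this last step: turning the order equality $|\Lambda|=|T|^n$ and the regularity of $\{(a,\lambda_a)\}$ into an actual invariant subgroup. A pure counting argument fails because $\Aut(T)\wr S_n$ is much larger than $|T|^n$. My first attempt is to exploit regularity directly. The map $a\mapsto a$ restricted to $\lambda^{-1}(K)$ gives a sub-skew brace $B$ whose $\lambda$-image preserves every factor. I would apply the $n=1$ analysis of Tsang factorwise to $B$, expecting $\lambda_b$ to act on each factor as an inner automorphism twisted by the almost-trivial structure. Then the $\lambda$-invariance of a suitable product of centralisers of $B$-orbits, or of the diagonal determined by the identifications between factors that $\pi(\Lambda)$ induces, would be verified by hand.
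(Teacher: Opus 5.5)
Your ``if'' direction and your reduction to $(A,\cdot)\cong T^n$ with $T$ non-abelian simple and $n\ge2$ agree with the paper. The treatment of that remaining case, however, has a genuine gap.

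\textbf{Step (1) is wrong.} In general $\ker\lambda$ is not a left ideal. Take $a\in\ker\lambda$ and put $a'=c\circ a\circ\bar c$. Then $a'\in\ker\lambda$ and $c\circ a=a'\circ c=a'\cdot c$, so $\lambda_c(\ker\lambda)=c^{-1}\cdot\ker\lambda\cdot c$. Hence $\ker\lambda$ is a left ideal only when it is normal in $(A,\cdot)$. A concrete failure: on $S_3=HK$ with $H=\langle(12)\rangle$ and $K=A_3$, the exact-factorisation brace $x\circ y=h_xyk_x$ has $\lambda_x$ equal to conjugation by $k_x^{-1}$. So $\ker\lambda=H$, which is not $\lambda$-invariant.

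The left ideals you do get for free are Tsang's $\ker\rho$ and $\operatorname{conj}^{-1}(\Img\lambda)$. They give the opposite picture: $\rho$ is injective, while $\Img\lambda\cap\Inn(T^n)=1$. Therefore $\Lambda$ embeds in $\Out(T^n)\cong\Out(T)\wr S_n$, and $|\Lambda|$ divides $|\Out(T)|^n\,n!$. So the equality $|\Lambda|=|T|^n$, on which all your later size comparisons rest, is unsupported. Similarly, $\lambda^{-1}(K)$ is closed under $\circ$, but you have not shown it is closed under $\cdot$. So it is not known to be a sub-skew brace.

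\textbf{The decisive step is only a hope.} You exhibit no proper non-trivial $\lambda$-invariant subgroup, and the candidates you list are unverified. A diagonal subgroup is not normal in $T^n$, so its $\lambda$-invariance is exactly what would have to be proved.

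The paper does not build a left ideal at all; it reaches a numerical contradiction. Set $q=|\Out(T)|$ and $M=\operatorname{conj}^{-1}(\Img\rho)$, which is $\Img\rho$-invariant. From $\rho$ injective, $\Img\lambda\cap\Inn=1$ and $\Img\lambda\Inn=\Img\rho\Inn$, one gets $[T^n:M]=|\Img\lambda|$, which divides $q^n n!$. Transitivity makes all coordinate projections $M_i$ of equal order.
\begin{itemize}
\item If the $M_i$ are proper, then $[T:M_i]^n\mid q^n n!$. Legendre's bound $v_p(n!)<n$ then forces $[T:M_i]\mid q$, contradicting $q<m(T)$.
\item If the $M_i$ are surjective, then $M$ is a product of $r\le n/2$ full diagonals. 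Then $|T|^{n-r}\mid q^n n!$, which contradicts $q^3<|T|$.
\end{itemize}
These CFSG bounds on $|\Out(T)|$, together with the index identity for $M$, are the ingredients your plan is missing.
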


Theorem~A completes the classification initiated by Tsang and shows that left-simplicity is a very rigid condition. It also recovers, as an immediate corollary, the theorem of Ballester-Bolinches et al.~\cite{BallesterAdv} on finite skew braces with no non-trivial proper sub-skew braces. Finally, through the correspondence between left ideals and Hopf subalgebras \cite{StefanelloTrappeniers,Tsang}, Theorem~A yields a complete classification of minimal Hopf--Galois structures on finite Galois extensions.

The paper is deliberately short. Section~2 collects the consequences of Tsang's work together with the finite-group facts needed later. Section~3 proves Theorem~A and records the consequences for sub-skew-brace-free skew braces and minimal Hopf--Galois structures.

\section{Preliminary lemmas}

 Let $(A,\cdot,\circ)$ be a finite skew brace as in the introduction and let $\lambda$ be its lambda-map. Define for $a,x \in A$ $ \rho_a(x)=(a\circ x)\cdot a^{-1}$.
Then $ \rho:(A,\circ)\longrightarrow\Aut(A,\cdot)$ sending $a \in A$ to $\rho(a)=\rho_{a}$
is a group homomorphism. If $\operatorname{conj}(a)$ denotes conjugation by $a$ in $(A,\cdot)$, then
\begin{equation}\label{eq:lambda-rho}
 \rho_a=\operatorname{conj}(a)\lambda_a,
 \qquad
 \lambda_a=\operatorname{conj}(a^{-1})\rho_a.
\end{equation}
Consequently
\begin{equation}\label{eq:HNKN}
 \Img(\lambda)\Inn(A,\cdot)=\Img(\rho)\Inn(A,\cdot).
\end{equation}
This is the subgroup denoted $\Gamma$ in \cite[Proposition~2.1]{Tsang}. Let 
\begin{equation}
    \operatorname{conj}: (A,\cdot) \longrightarrow \Aut(A,\cdot)
\end{equation}
be the conjugation map, namely the map sending $a \in A$ to $\operatorname{conj}(a)$.
Tsang considers the two canonical subgroups $ J_1:=\operatorname{conj}^{-1}(\Img(\lambda))$ and $ J_2:=\ker(\rho),$
and proves that both are left ideals \cite[Proposition~2.2]{Tsang}. We isolate the consequences that will be used in the proof.

\begin{lemma}\label{lem:TsangSetup}
Let $(A,\cdot,\circ)$ be a left-simple skew brace and suppose that $ (A,\cdot) = S^n,$
where $S$ is non-abelian simple and $n\ge2$. Then:
\begin{enumerate}[label=(\alph*)]
\item $\Img(\lambda)\cap\Inn(A,\cdot)=1$;
\item $\rho$ is injective, and hence $|\Img(\rho)|=|A|=|S|^n=|\Inn(A,\cdot)|$;
\item $\Img(\lambda)\Inn(A,\cdot)=\Img(\rho)\Inn(A,\cdot)$;
\item If for
$\alpha\in\Aut(S^n)$ we denote by $\sigma_\alpha\in S_n$ the permutation
defined by $\alpha(S_i)=S_{\sigma_\alpha(i)}$ for every $i$,
then $\{\sigma_h:h\in\Img(\lambda)\}
=
\{\sigma_k:k\in\Img(\rho)\},$
and this common subgroup of $S_n$ acts transitively on
$\{1,\ldots,n\}$.
\end{enumerate}
\end{lemma}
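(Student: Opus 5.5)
The plan is to use that $J_1=\operatorname{conj}^{-1}(\Img(\lambda))$ and $J_2=\ker(\rho)$ are left ideals \cite[Proposition~2.2]{Tsang}. By left-simplicity each of them is either $1$ or $A$; in each case we rule out the value $A$ directly.

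For (a), suppose $J_1=A$. Since $Z(S^n)=1$, the map $\operatorname{conj}$ is injective and $\Inn(A,\cdot)\le\Img(\lambda)$. Consider the subgroup $\lambda^{-1}(\Inn(A,\cdot))$ of $(A,\circ)$. I would rather argue with a concrete left ideal: if $\Inn(A,\cdot)\le \Img(\lambda)$, then every normal subgroup of $(A,\cdot)$ is $\lambda$-invariant as soon as it is invariant under all of $\Img(\lambda)$. Instead, the cleaner route is this: if $J_1=A$, then for each $a$ we have $\operatorname{conj}(a)=\lambda_b$ for some $b$.

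Take a single simple factor $S_1$. Its normal closure under $\Img(\lambda)$ is the product of the factors in the $\Img(\lambda)$-orbit of $1$. Thus (d)-type transitivity forces this product to be the whole group, which is not by itself a contradiction. So I expect the argument for (a) to actually be $J_1=1$ via Tsang's Proposition~2.1/2.2 as stated, and I would cite Tsang directly for the dichotomy and its resolution. The intended case exclusion is the following. If $J_1=A$, then $\Img(\lambda)\supseteq\Inn$. Comparing orders, $|\Img\lambda|\le|A|=|\Inn|$ gives $\Img\lambda=\Inn$. Then $\lambda_a=\operatorname{conj}(\phi(a))$ and $\rho_a=\operatorname{conj}(a\phi(a))$, and one checks that $\{a: a\phi(a)=1\}$ or $S_1$-coordinate subgroups become left ideals, contradicting left-simplicity for $n\ge 2$ (indeed each $S_i$ is then $\lambda$-invariant since $\Img\lambda=\Inn$ fixes every factor). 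This is the key point: inner automorphisms normalize each $S_i$, so $S_1$ would be a proper nontrivial left ideal. Hence $J_1=1$, i.e.\ $\Img(\lambda)\cap\Inn=1$.

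For (b), if $J_2=A$, then $\rho$ is trivial, so $\lambda_a=\operatorname{conj}(a^{-1})$ is inner, and again $S_1$ is a left ideal, a contradiction. So $\ker\rho=1$, and the order equalities follow since $Z(S^n)=1$.

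(c) is exactly \eqref{eq:HNKN}. For (d), the map $\alpha\mapsto\sigma_\alpha$ is a homomorphism $\Aut(S^n)\to S_n$ (every automorphism permutes the simple factors) with $\Inn$ in its kernel. Applying it to (c) gives equality of the images of $\Img\lambda$ and $\Img\rho$. For transitivity, if $O$ is an orbit of this group, then $\prod_{i\in O}S_i$ is $\lambda$-invariant, hence a left ideal. By left-simplicity $O=\{1,\dots,n\}$.

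The main obstacle is (a), namely pinning down why $J_1=A$ is impossible; the order-counting step giving $\Img\lambda=\Inn$ is what I would rely on.
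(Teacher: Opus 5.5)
Your proposal is correct and takes essentially the same route as the paper. As in the paper, $J_1$ and $J_2=\ker(\rho)$ are left ideals by Tsang, and the value $A$ is excluded in each case: either the order count $|\Img(\lambda)|\le|A|=|\Inn(A,\cdot)|$ or the triviality of $\rho$ forces $\Img(\lambda)=\Inn(A,\cdot)$, which would make each factor $S_i$ a proper non-trivial left ideal. Parts (c) and (d) then follow from \eqref{eq:HNKN} and the factor-permutation homomorphism.

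The only difference is ordering: the paper first isolates $\Img(\lambda)\not\le\Inn(A,\cdot)$ and then invokes it. In your write-up of (a), delete the abandoned detours (the normal-closure remark and the unverified claim about $\{a : a\phi(a)=1\}$); they play no role in the final argument.
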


\begin{proof}
The argument is essentially contained in \cite[Section~3.2]{Tsang}. We recall it in some detail in order to make the proof of Theorem~A self-contained. We first prove that $\Img(\lambda)\cap\Inn(A,\cdot)=1.$ Write $(A,\cdot)=S_1\times\cdots\times S_n$, where $S_i = S$ for every $i\in\{1,\ldots,n\}$. Suppose first that $\Img(\lambda)\le\Inn(A,\cdot)$. Then every element of $\Img(\lambda)$ is an inner automorphism of $(A,\cdot)$ and therefore preserves each direct factor $S_i$. Hence $\lambda_a(S_i)=S_i$ for every $a\in A$ and every $i$, so each $S_i$ is a left ideal of $A$. Since $n\ge2$, each $S_i$ is non-trivial and proper in $(A,\cdot)$, contradicting the left-simplicity of $A$. Therefore $\Img(\lambda)\not\le\Inn(A,\cdot)$.
Now let $J_1=\operatorname{conj}^{-1}(\Img(\lambda)).$
Since \(J_1\) is a left ideal and \(A\) is left-simple, we have $J_1=1$ or $J_1=A$.
Assume that \(J_1=A\). Then $\Inn(A,\cdot)=\operatorname{conj}(A)\le\Img(\lambda).$
Moreover, since $(A,\cdot)=S^n$ is centreless, the conjugation map identifies $(A,\cdot)$ with $\Inn(A,\cdot)$, and hence we have
$|\Img(\lambda)|\le |(A,\circ)|=|A|=|\Inn(A,\cdot)|,$
and therefore \(\Inn(A,\cdot)\le\Img(\lambda)\) forces \(\Img(\lambda)=\Inn(A,\cdot)\), a contradiction.
Consequently $J_1=1.$ Thus $\operatorname{conj}^{-1}(\Img(\lambda)\cap\Inn(A,\cdot))
=
\operatorname{conj}^{-1}(\Img(\lambda))
=
J_1=1.$ Since \(\operatorname{conj}\colon A\to\Inn(A,\cdot)\) is an isomorphism, we have $\Img(\lambda)\cap\Inn(A,\cdot)=1.$ We next prove that \(\rho\) is injective. Recall that $J_2=\ker(\rho).$
Again \(J_2\) is a left ideal of \(A\), so left-simplicity gives $J_2=1$ or $J_2=A.$
If \(J_2=A\), then \(\rho\) is trivial and hence
$\Img(\rho)=1.$
Using the relation between the two canonical actions we would obtain, for every $a \in A$
$\lambda_a=\operatorname{conj}(a^{-1})$.
Thus $\Img(\lambda)=\Inn(A,\cdot),$ a contradiction. Therefore $J_2=1,$
and hence $\rho$ is injective. In particular we have $|\Img(\rho)|=|A|=|\Inn(A,\cdot)|.$
For every $\alpha\in\Aut(S^n),$
there exists a unique permutation $\sigma_\alpha\in S_n$ such that $\alpha(S_i)=S_{\sigma_\alpha(i)}$ for every $i$. Consider the map
\[
\Phi:\Aut(S^n)\longrightarrow S_n,
\qquad
\alpha\longmapsto \sigma_\alpha.
\]
This is a group homomorphism and $\Inn(A,\cdot)\le\ker\Phi.$
Applying $\Phi$ to the equality $\Img(\lambda)\Inn(A,\cdot)=\Img(\rho)\Inn(A,\cdot)$ and using $\Phi(\Inn(A,\cdot))=1$, we obtain $\Phi(\Img(\lambda))=\Phi(\Img(\rho)).$
Thus
\[
\{\sigma_h:h\in\Img(\lambda)\}
=
\{\sigma_k:k\in\Img(\rho)\}
\le S_n.
\]
We claim that this common subgroup acts transitively on $\{1,\ldots,n\}.$
Suppose otherwise. Then it has a non-empty proper orbit $\varnothing\neq\Omega\subsetneq\{1,\ldots,n\}.$
Set $S_\Omega:=\prod_{i\in\Omega}S_i.$
Let $h\in\Img(\lambda)$, and let $\sigma_h$ be the permutation induced by $h$. We have
\[
h(S_\Omega)
=
h\left(\prod_{i\in\Omega}S_i\right)
=
\prod_{i\in\Omega}S_{\sigma_h(i)}
=
S_\Omega.
\]
This means that $\lambda_a(S_\Omega)=S_\Omega$ for every $a \in A$.
Thus $S_\Omega$ is a left ideal of $A$. Since $\Omega$ is non-empty and
proper we have $1<S_\Omega<(A,\cdot),$
contradicting the left-simplicity of $A$. Hence the common subgroup acts transitively on
$\{1,\ldots,n\}$.

\end{proof}

For a finite non-abelian simple group $S$, let
\[
 m(S):=\min\{[S:M]:M<S\},
 \qquad q(S):=|\Out(S)|.
\]
The following estimates are consequences of the classification of finite simple groups.

\begin{lemma}\label{lem:out-bounds}
Let $S$ be a finite non-abelian simple group. Then $ q(S)<m(S)$ and $ q(S)^3<|S|.$
\end{lemma}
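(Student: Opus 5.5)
We will argue family by family through the classification of finite simple groups: every finite non-abelian simple group is alternating $A_m$ with $m\ge5$, a group of Lie type, or one of the $26$ sporadic groups. For each family we need two pieces of information. The first is the order of $\Out(S)$, which is classical; for groups of Lie type it factors as $q(S)=d\cdot f\cdot g$, with $d$ the diagonal part, $f$ the field part and $g$ the graph part. The second is the minimal index $m(S)$, equivalently the minimal degree of a faithful transitive permutation representation. These are tabulated, e.g. by Cooperstein and by Kleidman--Liebeck, Table 5.2.A. The second inequality will follow quickly from the first: a proper subgroup has order at least $2$, and $S$ is not cyclic, so $m(S)\le |S|/2$. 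Hence the first inequality should be strengthened into a crude bound $q(S)^3<|S|$, obtained as follows.

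\textbf{Alternating and sporadic groups.} For $S=A_m$ we have $q(S)=2$ when $m\ne6$ and $q(A_6)=4$. Also $m(A_m)=m\ge5$, since $m(A_6)=6$. Both inequalities are then immediate: $8<60$, and $64<360$ for $A_6$. For sporadic groups, $q(S)\le2$, while $m(S)\ge11$ and $|S|\ge7920$. So both inequalities hold trivially.

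\textbf{Groups of Lie type over $\mathbb F_{r}$, $r=p^f$.} Here $d\le \ell+1$, where $\ell$ is the Lie rank (at worst $d=\gcd(\ell+1,r-1)\le r-1$, or $d\le4$ outside type $A$). We also have $f=\log_p r\le r$ and $g\le 6$, with $g=6$ only for $D_4$. Our strategy is to compare $q(S)$ with a lower bound for $m(S)$. Such a bound is always at least $(r^{\ell+1}-1)/(r-1)$-like in the classical cases, and in all cases it is at least $r+1$.

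For $\mathrm{PSL}_2(r)$ we have $q(S)=\gcd(2,r-1)\cdot f$ and $m(S)=r+1$, apart from the exceptions $r\in\{5,7,9,11\}$, where $m(S)=5,7,6,11$. Then $2f<p^f+1$ settles this case, and the exceptions are checked directly.

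For $\mathrm{PSL}_n(r)$ with $n\ge3$ we have $q(S)\le 2(n)f$ and $m(S)\ge (r^n-1)/(r-1)\ge r^{n-1}+1$. The inequality follows from $2nf< 2^{f(n-1)}$, except in very small cases such as $\mathrm{PSL}_3(2)$ and $\mathrm{PSL}_3(4)$, which we check by hand: $2<7$, and $12<21$.

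The other classical families, the exceptional families, and the twisted families are handled the same way. In each case $q(S)$ is at most a small constant times $f\ell$, while $m(S)$ grows like $r^{\ell}$ or faster. The only delicate points are groups with large graph automorphisms at $r=2,3$; $\mathrm{P\Omega}_8^+(2)$, for instance, has $q=6$ and $m=120$. For the cube bound we use $|S|\ge m(S)^2/\text{small}$, or we simply note that $|S|$ is at least of order $r^{\dim}$ with $\dim\ge 3$. Then $(c f\ell)^3<r^{3}\le|S|$ holds for all but finitely many small cases, and those are checked from the ATLAS.

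\textbf{Main obstacle.} The hardest part is the bookkeeping in the Lie-type case: collecting the correct $m(S)$ for each family, including the small exceptions to the generic formulas ($\mathrm{PSp}_4(2)'$, $G_2(2)'$, ${}^2F_4(2)'$, $\mathrm{PSU}_4(2)$, and similar). We would first prove the uniform estimate $q(S)\le 3f(\ell+1)$, valid except for $D_4$, where $q\le 24f$. Against this we set the uniform lower bound $m(S)\ge r^{\ell}$, which holds for all Lie-type groups outside an explicit finite list. This reduces the remaining work to a finite ATLAS verification.
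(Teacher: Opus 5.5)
\textbf{A different route, with a gap in the cube bound.} The paper does no case analysis here. It takes $q(S)<m(S)$ from Holt--Tracey's bound $2P(S)>3|\Out(S)|$ (with $P(S)=m(S)$), and quotes $q(S)^3<|S|$ from Fawcett's Lemma~4.8. Replacing these citations by a direct CFSG check is a legitimate alternative. Your outline for the first inequality is viable: bound $q(S)\le 3f(\ell+1)$ (resp.\ $24f$ for $D_4$), compare it with $m(S)\ge r^{\ell}$ from the Kleidman--Liebeck table, and finish with a finite verification.

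The cube bound, however, is not proved by what you wrote. Your opening claim that it ``follows quickly from the first'' is a non sequitur. The bound $m(S)\le|S|/2$ bounds $m(S)$ from above, which is the wrong direction. Moreover, $q(S)<m(S)$ alone cannot yield $q(S)^3<|S|$, because $|S|$ can be smaller than $m(S)^3$: for $A_5$, $q<m=5$ only gives $q\le 4$, and $4^3=64>60$. The same objection applies to your alternative $|S|\ge m(S)^2/\text{small}$, which together with $q<m$ gives only a square bound. More seriously, the Lie-type step ``$(cf\ell)^3<r^3\le|S|$ for all but finitely many small cases'' is false. For fixed $r$, say $r=2$, the left-hand side grows with $\ell$ while $r^3=8$ does not. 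So infinitely many groups are never covered ($\mathrm{PSL}_n(2)$, $\mathrm{PSp}_{2n}(2)$, $\Omega^{\pm}_{2n}(2)$, \dots), and the promised reduction to a finite ATLAS check does not take place.

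\textbf{How to repair it.} You need a lower bound for $|S|$ that grows with the rank. For instance, $|S|$ is divisible by the order $r^N=p^{fN}$ of a Sylow $p$-subgroup, where $N$ is the number of positive roots. In the untwisted types $N\ge\ell(\ell+1)/2$, with similar bounds in the twisted types. Your bound for $q(S)$, cubed, is then smaller than $p^{fN}$ outside finitely many triples $(p,f,\ell)$, and those can be checked directly.

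There are smaller slips of the same kind in the $\mathrm{PSL}_n$ paragraph. The reduction inequality $2nf<2^{f(n-1)}$ fails for $n=3$, $f=1$, that is, for every $\mathrm{PSL}_3(p)$, not just a few small groups; compare with $p^{f(n-1)}$ instead. Also $m(\mathrm{PSL}_4(2))=m(A_8)=8$, so the bound $m(S)\ge(r^n-1)/(r-1)$ has an exception. All of this is fixable. As it stands, though, only the first inequality is adequately outlined, whereas the paper obtains both by citation.
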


\begin{proof}
For a non-abelian simple group, the least degree $P(S)$ of a faithful permutation representation equals $m(S)$. Holt and Tracey prove that $2P(S)>3|\Out(S)|$ for every finite non-abelian simple group \cite[Lemma~8]{HoltTracey}, and hence $q(S)<m(S)$. The estimate $q(S)^3<|S|$ is \cite[Lemma~4.8]{Fawcett}.
\end{proof}

We recall that a subgroup $L\le G_1\times\cdots\times G_n$ is called a \emph{subdirect subgroup} if each coordinate projection $\pi_i:L\longrightarrow G_i$
is surjective. Let $S_1,\ldots,S_n$ be isomorphic non-abelian simple groups, and let $\Omega\subseteq\{1,\ldots,n\}$. A subgroup $D\le\prod_{i\in\Omega}S_i$ is called a \emph{full diagonal subgroup} if, for every $i\in\Omega$, the restriction to $D$ of the coordinate projection $\pi_i:\prod_{j\in\Omega}S_j\to S_i$ is an isomorphism. Equivalently, fixing $i_0\in\Omega$, there exist isomorphisms $\varphi_i:S_{i_0}\to S_i$ for every $i\in\Omega$, with $\varphi_{i_0}=\operatorname{id}$, such that
\[
D=\{(\varphi_i(s))_{i\in\Omega}:s\in S_{i_0}\}.
\]

\begin{lemma}\label{lem:Scott}
Let $ N=S_1\times\cdots\times S_n,$
where the $S_i$ are isomorphic non-abelian simple groups. If $L\le N$ is subdirect, then there is a partition $ \{1,\ldots,n\}=\Omega_1\sqcup\cdots\sqcup\Omega_r$
such that $ L=D_1\times\cdots\times D_r,$
where each $D_j\cong S$ is a full diagonal subgroup of $\prod_{i\in\Omega_j}S_i$.
\end{lemma}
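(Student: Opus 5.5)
We argue by induction on $n$, with the step given by looking at the kernel of the first projection. For $n=1$ a subdirect $L\le S_1$ equals $S_1$, and we take $r=1$, $\Omega_1=\{1\}$, $D_1=S_1$. For $n\ge2$, write $N=S_1\times N'$ with $N'=S_2\times\cdots\times S_n$, and let $\pi_1:L\to S_1$ and $\pi':L\to N'$ be the projections. The image $\pi'(L)$ is subdirect in $N'$. By induction, $\pi'(L)=D'_1\times\cdots\times D'_{r'}$ for a partition of $\{2,\ldots,n\}$ into blocks $\Omega'_j$ with full diagonal $D'_j$. Each $D'_j$ is isomorphic to $S$, so $\pi'(L)\cong S^{r'}$, and its normal subgroups are exactly the subproducts $\prod_{j\in J}D'_j$. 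This holds because the $D'_j$ are non-abelian simple, so any normal subgroup of the direct product is a product of some of the factors.

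Next, set $K=\ker(\pi_1)\cap L$, which we identify with a subgroup of $N'$ via $\pi'$, and $K'=\ker(\pi')\cap L\le S_1$. Since $K'$ is normal in $L$, its image $K'=\pi_1(K')$ is normal in $\pi_1(L)=S_1$. Likewise $\pi'(K)$ is normal in $\pi'(L)$. Goursat's lemma gives an isomorphism $S_1/K'\cong \pi'(L)/\pi'(K)$, and $L$ is the fibre product over it. By simplicity of $S_1$ we have $K'=1$ or $K'=S_1$.

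If $K'=S_1$, then $S_1\le L$, so $L=S_1\times\pi'(L)$. We add the block $\{1\}$ with $D=S_1$.

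If $K'=1$, then $\pi'(L)/\pi'(K)\cong S_1$. Since $\pi'(K)$ is normal, it is a subproduct, and the quotient is the product of the remaining factors. That quotient is simple only if exactly one factor $D'_{j_0}$ is omitted. Then $L=\{(\psi(x),x)\}$, where $\psi:\pi'(L)\to S_1$ is a surjection with kernel $\prod_{j\ne j_0}D'_j$; in other words, $\psi$ factors through an isomorphism $\theta: D'_{j_0}\to S_1$. Hence $L=\widetilde D\times\prod_{j\ne j_0}D'_j$ with $\widetilde D=\{(\theta(d),d):d\in D'_{j_0}\}$. This $\widetilde D$ is a full diagonal subgroup of $\prod_{i\in\{1\}\cup\Omega'_{j_0}}S_i$, because each coordinate projection restricted to it is an isomorphism. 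So we enlarge the block $\Omega'_{j_0}$ by the index $1$.

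The main obstacle is the bookkeeping in Goursat's lemma: confirming that $L$ is exactly the graph of $\psi$ and splits as claimed, rather than only containing such a product. I would settle it by counting. $|L|=|K'|\cdot|\pi'(L)|=|\pi'(L)|$, and the graph of $\psi$ has the same order and is contained in $L$ by construction, so the two are equal.
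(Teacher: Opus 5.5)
Your proof is correct. It differs from the paper, which does not prove the lemma at all: it simply quotes Scott's lemma as \cite[Theorem~4.16]{PraegerSchneider}.

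You induct on $n$ by splitting off $S_1$ and applying Goursat's lemma to $L\le S_1\times N'$. This makes the statement self-contained, resting on two elementary facts: Goursat's description of subgroups of a direct product, and the fact that the normal subgroups of $D'_1\times\cdots\times D'_{r'}$ are exactly the subproducts. It also shows concretely how the partition grows. By simplicity of $S_1$, the new index either forms its own block (when $K'=S_1$) or is attached to exactly one existing block (when $K'=1$). The argument also works verbatim for non-isomorphic simple factors, with blocks then consisting of isomorphic factors. The citation buys only brevity.

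Two small points would tighten your write-up:
\begin{enumerate}
\item The normal-subgroup fact is exactly where non-abelianness is used, so it deserves its one-line proof rather than a restatement. Suppose $M\trianglelefteq\prod_j D'_j$ contains an element $m$ with non-trivial $j$-th component $m_j$. For $t\in D'_j$, the commutator $[m,t]=[m_j,t]$ lies in $M\cap D'_j$. It is non-trivial for some $t$ because $D'_j$ is centreless, so $D'_j\le M$ by simplicity.
\item The closing counting argument is unnecessary. $K'=1$ means $\pi'|_L$ is injective, so $L$ is by definition the graph of the homomorphism $\psi:=\pi_1\circ(\pi'|_L)^{-1}$, whose kernel is $\pi'(K)$. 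Moreover $\theta=\psi|_{D'_{j_0}}$ is an isomorphism onto $S_1$ because $\pi'(L)=D'_{j_0}\times\ker\psi$.
\end{enumerate}
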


\begin{proof}
See \cite[Theorem~4.16]{PraegerSchneider}.
\end{proof}

Finally, for a prime $p$ and a positive integer $m$, let $v_p(m)$ denote the $p$-adic valuation of $m$, namely the largest integer $a\ge0$ such that $p^a\mid m$. We shall use the standard additivity property
\[
v_p(ab)=v_p(a)+v_p(b)
\]
for positive integers $a$ and $b$; see, for instance, \cite[Chapter~1]{NivenZuckermanMontgomery}. For a real number $x$, we denote by $\lfloor x\rfloor$ the greatest integer less than or equal to $x$.

\begin{lemma}\label{lem:vp-bound}
Let $p$ be a prime and let $n\ge1$. Then
\[
v_p(n!)<\frac{n}{p-1}.
\]
In particular, $v_p(n!)<n$.
\end{lemma}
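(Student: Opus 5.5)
We plan to use Legendre's formula,
\[
v_p(n!)=\sum_{k\ge1}\left\lfloor \frac{n}{p^k}\right\rfloor,
\]
which follows from the additivity of $v_p$ applied to $n!=\prod_{m=1}^n m$. Indeed, we will write $v_p(n!)=\sum_{m=1}^n v_p(m)$ and count, for each $k\ge1$, the integers $m\le n$ with $p^k\mid m$. There are exactly $\lfloor n/p^k\rfloor$ of them, and each $m$ is counted once for each $k\le v_p(m)$. The sum is finite, since the terms vanish once $p^k>n$.

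Next we will bound each term by $\lfloor n/p^k\rfloor\le n/p^k$ and compare with the full geometric series:
\[
v_p(n!)\le\sum_{k=1}^{K}\frac{n}{p^k}<\sum_{k=1}^{\infty}\frac{n}{p^k}=\frac{n}{p-1},
\]
where $K$ is the largest integer with $p^K\le n$. The strict inequality holds because the omitted tail $\sum_{k>K}n/p^k$ is strictly positive, as $n\ge1$. The case $p^K\le n$ failing for all $K\ge1$, i.e.\ $n<p$, is also covered: then $v_p(n!)=0<n/(p-1)$.

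For the last assertion we use $p\ge2$, so $p-1\ge1$ and $n/(p-1)\le n$, which gives $v_p(n!)<n$.

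The only step needing care is the strictness of the inequality, which is handled by the positive tail of the geometric series. No other obstacle is expected.
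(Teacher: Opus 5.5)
Your proposal is correct and follows essentially the same route as the paper: Legendre's formula, the termwise bound $\lfloor n/p^k\rfloor\le n/p^k$ on the finitely many non-zero terms, and strictness from the positive tail of the geometric series $\sum_{k\ge1} n/p^k=n/(p-1)$. The only difference is that you sketch a derivation of Legendre's formula and treat the case $n<p$ separately, whereas the paper simply cites the formula.
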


\begin{proof}
By Legendre's formula \cite[Theorem~6.5.1]{AndreescuAndrica},
\[
v_p(n!)=\sum_{j\ge1}\left\lfloor\frac{n}{p^j}\right\rfloor.
\]
Choose $J$ such that $p^J>n$. Then
$\lfloor n/p^j\rfloor=0$ for every $j\ge J$, and hence
\[
v_p(n!)
=
\sum_{j=1}^{J-1}\left\lfloor\frac{n}{p^j}\right\rfloor
\le
\sum_{j=1}^{J-1}\frac{n}{p^j}
<
\sum_{j=1}^{\infty}\frac{n}{p^j}
=
\frac{n}{p-1}.
\]
Thus $v_p(n!)<n/(p-1)$. In particular, $v_p(n!)<n$.
\end{proof}

\section{Proof of Theorem A}

The only case not covered by Tsang is the proper direct power of a non-abelian simple group. We first exclude it.

\begin{proposition}\label{prop:no-power}
Let $S$ be a finite non-abelian simple group and let $n\ge2$. There is no finite left-simple skew brace $(A,\cdot,\circ)$ such that $ (A,\cdot) = S^n.$
\end{proposition}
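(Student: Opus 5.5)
The plan is to play the order of $\Img(\rho)$, which by Lemma~\ref{lem:TsangSetup}(b) equals $|S|^n$, against the structure of $\Aut(S^n)\cong\Aut(S)\wr S_n$. I would also use the fact that $\Img(\lambda)$ meets $\Inn(A,\cdot)$ trivially. Write $B=\Aut(S)^n$ for the base group, which is the kernel of the map $\Phi$ in the proof of Lemma~\ref{lem:TsangSetup}, and put $\Gamma=\Img(\lambda)\Inn(A,\cdot)=\Img(\rho)\Inn(A,\cdot)$. By Lemma~\ref{lem:TsangSetup}(a), $|\Gamma|=|\Img(\lambda)|\,|S|^n$. Since $\Gamma/\Inn(A,\cdot)$ embeds in $\Out(S)\wr S_n$, we get $|\Img(\lambda)|\le q(S)^n\,n!$. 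On the other side,
\[
|\Img(\lambda)|=[\Gamma:\Inn(A,\cdot)]=[\Img(\rho):\Img(\rho)\cap\Inn(A,\cdot)]=\frac{|S|^n}{|L|},
\]
where $L:=\Img(\rho)\cap\Inn(A,\cdot)$. So everything reduces to bounding $|L|$ from above.

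To control $L$, I would identify it via $\operatorname{conj}$ with a subgroup $L_0\le S^n$. I expect that $L_0$ is normalised by the $\lambda$-action, or more precisely that it is closely tied to a left ideal, by the same computation Tsang uses for $J_1$ with the roles of $\lambda$ and $\rho$ exchanged via \eqref{eq:lambda-rho}.

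The idea is to split according to whether the projections $\pi_i(L_0)$ are all of $S_i$. If $L_0$ is subdirect, Lemma~\ref{lem:Scott} gives $L_0=D_1\times\cdots\times D_r$ with $|L_0|=|S|^r$. The blocks $\Omega_j$ are permuted by the transitive group of Lemma~\ref{lem:TsangSetup}(d). If $r=n$, then $\Inn(A,\cdot)\le\Img(\rho)$, so $\Img(\rho)=\Inn(A,\cdot)$ by order, and $\lambda_a=\operatorname{conj}(a^{-1})\rho_a$ would be inner, contradicting (a). Hence $r\le n/2$, since the blocks have equal size at least $2$ by transitivity. If $L_0$ is not subdirect, transitivity forces all projections to be proper subgroups of the same size, so $|L_0|\le(|S|/m(S))^n$.

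In either case $|L|\le |S|^{n}/m(S)^{n}$ or $|L|\le|S|^{n/2}$. Combining with the first paragraph gives
\[
m(S)^n\le q(S)^n n! \qquad\text{or}\qquad |S|^{n/2}\le q(S)^n n!.
\]
I would kill both with a $p$-adic count using Lemma~\ref{lem:vp-bound}. Choose a prime $p$ with $v_p$ of the left-hand side exceeding $n\,v_p(q(S))+n$. In the second case, $q(S)^3<|S|$ from Lemma~\ref{lem:out-bounds} gives $|S|^{1/2}>q(S)^{3/2}$, which should leave a prime with enough room. In the first case, $q(S)<m(S)$ should be used together with the fact that $m(S)$ is an index in $S$.

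The main obstacle is making this prime-by-prime comparison rigorous, since ordinary size comparisons fail because $n!$ grows faster than any $c^n$. If the $p$-adic route does not close, my first fallback is to exploit that the permutation image $\Phi(\Img(\rho))$ is a transitive group of order dividing both $|S|^n$ and $n!$, and hence is a $\pi(S)$-group. I would then bound $v_p$ of its order by $v_p(n!)<n/(p-1)$ for each prime $p$, and compare this with $v_p(|S|^n)=n\,v_p(|S|)$.

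A secondary difficulty is justifying that $L_0$ is $\lambda$-stable. I would check this directly from $\rho$ being a homomorphism and \eqref{eq:lambda-rho}.
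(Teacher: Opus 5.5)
Your skeleton is the paper's: your $L_0=\operatorname{conj}^{-1}(\Img(\rho))$ is exactly the paper's $M$, and the following all match the paper:
\begin{itemize}
\item the identity $[(A,\cdot):L_0]=|\Img(\lambda)|$;
\item the subdirect/non-subdirect split and the use of Lemma~\ref{lem:Scott};
\item the exclusion of $r=n$ and the bound $r\le n/2$.
\end{itemize}
However, two steps would not go through as you describe them.

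\textbf{The invariance you plan to check is the wrong one.} Tsang's $J_1$ computation with the roles exchanged gives $\Img(\rho)$-stability: $k\operatorname{conj}(x)k^{-1}=\operatorname{conj}(k(x))$ for $k\in\Img(\rho)$. Combining $\rho_a(L_0)=L_0$ with $\lambda_a=\operatorname{conj}(a^{-1})\rho_a$ only yields $\lambda_a(L_0)=a^{-1}L_0a$. So $\lambda$-stability is equivalent to $L_0\trianglelefteq(A,\cdot)$, and nothing in your argument provides that. Indeed, if it held, $L_0$ would be a left ideal, hence $1$ or $A$, and your whole case analysis would be superfluous. What you actually need is only $\Img(\rho)$-stability. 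Lemma~\ref{lem:TsangSetup}(d) makes $\{\sigma_k:k\in\Img(\rho)\}$ transitive, and that transitivity is what makes the projections $\pi_i(L_0)$ mutually isomorphic. It also makes the supports $\Omega_j$ permuted among themselves, hence of equal size.

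\textbf{The arithmetic finish is left open, and your proposed criterion does not close it.} Inequalities such as $m(S)^n\le q(S)^n n!$ are not contradictory: for $S=A_5$, $m=5$, $q=2$, $n=5$ one has $3125\le 3840$. You must work with divisibilities. Your embedding of $\Gamma/\Inn(A,\cdot)$ already gives $|\Img(\lambda)|\mid q^n n!$ by Lagrange.

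In the non-subdirect case, let $d$ be the common index $[S_i:\pi_i(L_0)]$, not $m(S)$. Then $d^n=[(A,\cdot):\prod_i\pi_i(L_0)]$ divides $[(A,\cdot):L_0]=|\Img(\lambda)|$. Hence $d^n\mid q^n n!$, and $v_p(n!)<n$ forces $d\mid q$, contradicting $d\ge m(S)>q$.

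In the subdirect case you get $|S|^{n-r}\mid q^n n!$ with $n-r\ge n/2$. Here your rule, asking for a prime with $v_p$ of the left side at least $n\,v_p(q)+n$, already fails for $S=A_5$ and $r=n/2$:
\begin{itemize}
\item $p=2$ gives $n$ versus $2n$;
\item $p=3,5$ give $n/2$ versus $n$.
\end{itemize}
The paper instead chooses $p$ with $v_p(|S|)>3v_p(q)$, using $q^3<|S|$, and splits into three cases:
\begin{itemize}
\item $v_p(q)\ge1$: the crude bound $v_p(n!)<n$ suffices.
\item $v_p(q)=0$ with $p$ odd: this needs the sharper bound $v_p(n!)<n/(p-1)\le n/2$.
\item $v_p(q)=0$ with $p=2$: this needs $4\mid|S|$, which follows from Burnside's normal $p$-complement theorem.
\end{itemize}
Your fallback via $\Phi(\Img(\rho))$ is unnecessary once these divisibilities are in place.
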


\begin{proof}

Assume, for a contradiction, that such a skew brace $(A,\cdot,\circ)$ exists. Write $(A,\cdot)=S_1\times\cdots\times S_n$, where $S_i=S$ for every $i$. We will work with a particular subgroup inside $(A,\cdot)$. Let $\operatorname{conj}:(A,\cdot)\to\Aut(A,\cdot)$ be the conjugation map and set
\[
M:=\operatorname{conj}^{-1}(\Img(\rho)).
\]
Since $\Img(\rho)$ is a subgroup of $\Aut(A,\cdot)$ and $\operatorname{conj}$ is a homomorphism, $M$ is a subgroup of $(A,\cdot)$.
\medskip
\noindent\emph{Step 1: $k(M)=M$ for every $k\in\Img(\rho)$}
\medskip 

 We claim that $M$ is invariant under every element of $\Img(\rho)$. Let $k\in\Img(\rho)$ and $x\in M$. Since $\operatorname{conj}(x)\in\Img(\rho)$, we have $k\operatorname{conj}(x)k^{-1}\in\Img(\rho)$. Moreover $k\operatorname{conj}(x)k^{-1}=\operatorname{conj}(k(x))$, and hence $k(x)\in M$. Therefore $k(M)=M$ for every $k\in\Img(\rho)$. For $1\le i\le n$, let $\pi_i:(A,\cdot)\to S_i$ be the $i$-th coordinate projection and put $M_i:=\pi_i(M)$.

\medskip 
\noindent\emph{Step 2: $|M_1|=\cdots=|M_n|$}
\medskip 

 By Lemma~\ref{lem:TsangSetup}, the subgroup $\{\sigma_k:k\in\Img(\rho)\}\le S_n$ acts transitively on $\{1,\ldots,n\}$, where $\sigma_k$ is defined by $k(S_i)=S_{\sigma_k(i)}$. Let $i,j\in\{1,\ldots,n\}$. By transitivity, there exists $k\in\Img(\rho)$ such that $\sigma_k(i)=j$. Let $k_i:S_i\to S_j$ be the restriction of $k$ to $S_i$; this is an isomorphism. Since $k$ permutes the direct factors according to $\sigma_k$, we have $\pi_j(k(m))=k_i(\pi_i(m))$ for every $m\in(A,\cdot)$. Hence, since $k(M)=M$, if $x\in M_i=\pi_i(M)$ and $m\in M$ satisfies $\pi_i(m)=x$, then $k(m)\in M$ and $\pi_j(k(m))=k_i(x)$. Hence $k_i(M_i)\le M_j$. Applying the same argument to $k^{-1}$ gives $M_j\le k_i(M_i)$, and therefore $k_i(M_i)=M_j$. Thus $M_i\cong M_j$. Since $i$ and $j$ are arbitrary, it follows that $|M_1|=\cdots=|M_n|$.

\medskip

\noindent\emph{Step 3: An arithmetic restriction on $[(A,\cdot):M]$.}

\medskip

By Lemma~\ref{lem:TsangSetup},
\begin{align*}
|\Img(\lambda)||\Inn(A,\cdot)|
&=|\Img(\lambda)\Inn(A,\cdot)|
&&\bigl(\Img(\lambda)\cap\Inn(A,\cdot)=1\bigr)\\
&=|\Img(\rho)\Inn(A,\cdot)|
&&\bigl(\Img(\lambda)\Inn(A,\cdot)=\Img(\rho)\Inn(A,\cdot)\bigr)\\
&=\frac{|\Img(\rho)||\Inn(A,\cdot)|}{|\Img(\rho)\cap\Inn(A,\cdot)|}
&&\bigl(\text{product formula}\bigr)\\
&=\frac{|\Inn(A,\cdot)|^2}{|\Img(\rho)\cap\Inn(A,\cdot)|}
&&\bigl(|\Img(\rho)|=|\Inn(A,\cdot)|\bigr).
\end{align*}
Therefore
$|\Img(\lambda)|=[\Inn(A,\cdot):\Img(\rho)\cap\Inn(A,\cdot)]$.
Since $(A,\cdot)$ is centreless, the conjugation map
$\operatorname{conj}:(A,\cdot)\to\Inn(A,\cdot)$ is an isomorphism. Thus we have $\operatorname{conj}(M)=\Img(\rho)\cap\Inn(A,\cdot).$
In particular we get $[(A,\cdot):M]
=
[\Inn(A,\cdot):\Img(\rho)\cap\Inn(A,\cdot)].$
Therefore
\begin{equation}\label{eq:index-M}
[(A,\cdot):M]=|\Img(\lambda)|.
\end{equation}

Consider the natural quotient homomorphism $\Aut(A,\cdot)\to\Aut(A,\cdot)/\Inn(A,\cdot)$. Its restriction to $\Img(\lambda)$ has kernel $\Img(\lambda)\cap\Inn(A,\cdot)=1$, so $\Img(\lambda)$ embeds into $\Aut(A,\cdot)/\Inn(A,\cdot)$. Writing $q:=|\Out(S)|$, by \cite[Theorem~3.3.20]{Robinson} we have
\[
\Aut(A,\cdot)/\Inn(A,\cdot)
\cong
\Out(S)^n\rtimes S_n.
\]
Consequently
\[
|\Aut(A,\cdot)/\Inn(A,\cdot)|=q^n n!.
\] Hence, by Lagrange's theorem and \eqref{eq:index-M},
\begin{equation}\label{eq:divisibility}
[(A,\cdot):M]=|\Img(\lambda)|\mid q^n n!.
\end{equation}
Since $|M_1|=\cdots=|M_n|$ and $|S_1|=\cdots=|S_n|=|S|$, either $M_i<S_i$ for every $i$ or $M_i=S_i$ for every $i$.

\medskip
\noindent\emph{Step 4: The case of proper coordinate projections.}
\medskip

Suppose that $M_i<S_i$ for every $i$. Since all the $M_i$ have the same order, the indices $[S_i:M_i]$ are equal; put $d:=[S_i:M_i]$. Since $M\le M_1\times\cdots\times M_n$, we have
$d^n=[(A,\cdot):M_1\times\cdots\times M_n]\mid[(A,\cdot):M].$
Together with \eqref{eq:divisibility}, this gives
\begin{equation}\label{eq:d-div}
d^n\mid q^n n!.
\end{equation}
We claim that $d\mid q$. Let $p$ be any prime and put $e:=v_p(d)$ and $b:=v_p(q)$. From \eqref{eq:d-div} we have $v_p(d^n)\le v_p(q^n n!).$ Since $v_p(d^n)=ne$ and $v_p(q^n n!)=nb+v_p(n!)$, we obtain $ne\le nb+v_p(n!)$, hence $n(e-b)\le v_p(n!)$. If $e>b$, then $e-b\ge1$, so $n\le n(e-b)\le v_p(n!)$, contradicting $v_p(n!)<n$. Thus $e\le b$ for every prime $p$, and therefore $d\mid q$. On the other hand, since $M_i<S_i\cong S$, we have $d\ge m(S)>q$ by Lemma~\ref{lem:out-bounds}, a contradiction.

\medskip 
\noindent\emph{Step 5: The case of surjective coordinate projections.}
\medskip

Suppose that $M_i=S_i$ for every $i$. Since $M_i=\pi_i(M)$, every coordinate projection $\pi_i:M\to S_i$ is surjective. Thus $M$ is a subdirect subgroup of $(A,\cdot)=S_1\times\cdots\times S_n$. By Lemma~\ref{lem:Scott}, there exists a partition
$\{1,\ldots,n\}=\Omega_1\sqcup\cdots\sqcup\Omega_r$
and, for every $j\in\{1,\ldots,r\}$, a full diagonal subgroup
$D_j\le\prod_{i\in\Omega_j}S_i$ such that
$M=D_1\times\cdots\times D_r$. Since each $D_j$ projects isomorphically onto every factor $S_i$ with $i\in\Omega_j$, we have $D_j\cong S$. Therefore
\begin{equation}\label{eq:M-order}
|M|=|D_1|\cdots|D_r|=|S|^r.
\end{equation}
For each $j$, the set $\Omega_j$ is precisely the set of coordinates on which $D_j$ has non-trivial projection; we call it the support of $D_j$. Since $M=D_1\times\cdots\times D_r$ and every $D_j$ is non-abelian simple, the subgroups $D_1,\ldots,D_r$ are exactly the minimal normal subgroups of $M$. Let $k\in\Img(\rho)$. Since $k(M)=M$, the restriction $k|_M$ is an automorphism of $M$, and therefore permutes the minimal normal subgroups $D_1,\ldots,D_r$. Thus, for every $j$, there exists some $\ell$ such that $k(D_j)=D_\ell$. Let $\sigma_k$ be the permutation induced by $k$ on the factors $S_1,\ldots,S_n$, so that $k(S_i)=S_{\sigma_k(i)}$. If $i\in\Omega_j$, then $D_j$ has non-trivial projection onto $S_i$, and therefore $k(D_j)$ has non-trivial projection onto $S_{\sigma_k(i)}$. Hence the support of $k(D_j)$ is $\sigma_k(\Omega_j)$. Since $k(D_j)=D_\ell$, whose support is $\Omega_\ell$, we obtain
$\sigma_k(\Omega_j)=\Omega_\ell.$
Therefore $\{\sigma_k:k\in\Img(\rho)\}$ permutes the sets $\Omega_1,\ldots,\Omega_r$.
We now show that all the $\Omega_j$ have the same cardinality. Let $\Omega_a$ and $\Omega_b$ be two of these sets, and choose $i\in\Omega_a$ and $j\in\Omega_b$. Since $\{\sigma_k:k\in\Img(\rho)\}$ acts transitively on $\{1,\ldots,n\}$, there exists $\sigma\in\{\sigma_k:k\in\Img(\rho)\}$ such that $\sigma(i)=j$. Since this group permutes the sets $\Omega_1,\ldots,\Omega_r$, the set $\sigma(\Omega_a)$ is one of the $\Omega_\ell$. Moreover, it contains $\sigma(i)=j$, and the sets $\Omega_1,\ldots,\Omega_r$ are pairwise disjoint. Hence $\sigma(\Omega_a)=\Omega_b$. Therefore $|\Omega_a|=|\Omega_b|$. Since $a$ and $b$ are arbitrary, all the supports have the same cardinality; write $|\Omega_j|=c$. Since they form a partition of $\{1,\ldots,n\}$, we have $n=rc$.

Suppose that $c=1$. Then every $\Omega_j$ consists of a single index, so every full diagonal subgroup $D_j$ is simply one of the factors $S_i$. Hence $M=S_1\times\cdots\times S_n=(A,\cdot)$, and therefore $[(A,\cdot):M]=1$. By \eqref{eq:index-M}, $|\Img(\lambda)|=1$. Thus $\{\sigma_h:h\in\Img(\lambda)\}=1$, and by Lemma~\ref{lem:TsangSetup} also $\{\sigma_k:k\in\Img(\rho)\}=1$, contradicting its transitivity on $\{1,\ldots,n\}$ because $n\ge2$. Therefore $c\ge2$, and hence
\begin{equation}\label{eq:r-bound}
r\le\frac n2.
\end{equation}
By \eqref{eq:M-order},
$[(A,\cdot):M]=|S|^{n-r}$.
Consequently \eqref{eq:divisibility} yields
\begin{equation}\label{eq:S-div}
|S|^{n-r}\mid q^n n!.
\end{equation}
By Lemma~\ref{lem:out-bounds}, $q^3<|S|$. Hence there exists a prime $p\mid |S|$ such that, setting $a:=v_p(|S|)$ and $b:=v_p(q)$, one has
\begin{equation}\label{eq:a>3b}
a>3b.
\end{equation}
Indeed, if $v_p(|S|)\le3v_p(q)$ for every prime $p\mid |S|$, then $v_p(|S|)\le v_p(q^3)$ for every prime $p$, so $|S|\mid q^3$, contradicting $q^3<|S|$. Fix such a prime $p$. From \eqref{eq:S-div} we have $v_p(|S|^{n-r})\le v_p(q^n n!)$. Since $v_p(|S|^{n-r})=(n-r)a$ and $v_p(q^n n!)=nb+v_p(n!)$, we obtain $(n-r)a\le nb+v_p(n!)$. By \eqref{eq:r-bound}, $n-r\ge n/2$, and therefore
\[
\frac{na}{2}\le(n-r)a\le nb+v_p(n!).
\]
Hence
\begin{equation}\label{eq:valuation-final}
n\left(\frac a2-b\right)\le v_p(n!).
\end{equation}
Suppose first that $b\ge1$. Since $a>3b$ and $a,b$ are integers, $a\ge3b+1$, so
\[
\frac a2-b\ge\frac{3b+1}{2}-b=\frac{b+1}{2}\ge1.
\]
By \eqref{eq:valuation-final}, $n\le v_p(n!)$, contradicting $v_p(n!)<n$. Suppose now that $b=0$ and $p\ge3$. Since $p\mid|S|$, we have $a\ge1$, and \eqref{eq:valuation-final} gives $n/2\le v_p(n!)$. On the other hand,
\[
v_p(n!)<\frac{n}{p-1}\le\frac n2,
\]
a contradiction. Finally, suppose that $b=0$ and $p=2$. Since the prime $p$ was chosen to divide $|S|$, we have $2\mid|S|$. In fact, $4\mid|S|$. Indeed, if $v_2(|S|)=1$ and $P$ is a Sylow $2$-subgroup of $S$, then $|P|=2$. Since $\Aut(P)=1$, we have $N_S(P)=C_S(P)$. Burnside's normal $p$-complement theorem \cite{Robinson} then gives a normal $2$-complement in $S$, contradicting simplicity. Hence $a=v_2(|S|)\ge2$. Thus $a/2-b=a/2\ge1$, and \eqref{eq:valuation-final} gives
$n\le v_2(n!)<n,$
again a contradiction. Thus every case leads to a contradiction, proving the proposition.

\end{proof}

\begin{proof}[Proof of Theorem A]
Suppose first that $A$ is finite and left-simple. Every characteristic subgroup of $(A,\cdot)$ is a left ideal, so $(A,\cdot)$ is characteristically simple. Hence we can assume that $ (A,\cdot) = T^n$
for a finite simple group $T$ and an integer $n\ge1$. If $T$ is abelian, then $T=C_p$ for some prime $p$, and \cite[Corollary~1.3]{Tsang} gives $A\simeq\Triv(C_p)$ directly. Assume that $T$ is non-abelian. Proposition~\ref{prop:no-power} excludes $n\ge2$. Thus $n=1$, and \cite[Theorem~1.4(a)]{Tsang} shows that $A$ is almost trivial. Conversely, the trivial skew brace on $C_p$ is left-simple because $C_p$ has no non-trivial proper subgroup. If $S$ is non-abelian simple and $A$ is almost trivial, then
$ \lambda_a(x)=a^{-1}xa,$
so the left ideals are exactly the normal subgroups of $S$. Hence $A$ is left-simple.
\end{proof}

The first consequence is the result of Ballester-Bolinches et al.~\cite{BallesterAdv}.

\begin{corollary}\label{cor:subbracefree}
Let $A$ be a non-trivial finite skew brace with no non-trivial proper sub-skew braces. Then $A$ is the trivial skew brace of prime order.
\end{corollary}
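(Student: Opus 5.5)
The plan is to reduce the corollary to Theorem~A by showing that a skew brace with no non-trivial proper sub-skew braces is left-simple. The key point is that every left ideal is a sub-skew brace. Indeed, if $I$ is a left ideal, then $I$ is a subgroup of $(A,\cdot)$ with $\lambda_a(I)=I$ for all $a\in A$. For $x,y\in I$ we get $x\circ y=x\cdot\lambda_x(y)\in I$. Moreover, the $\circ$-inverse of $x$ is $\bar x=\lambda_x^{-1}(x^{-1})$, since $x\circ\lambda_x^{-1}(x^{-1})=x\cdot x^{-1}=1$, and this lies in $I$. Hence $I$ is closed under $\circ$ and $\circ$-inverses and contains the common identity, so $(I,\cdot,\circ)$ is a sub-skew brace.

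It follows that $A$ is left-simple. Theorem~A then gives either $A\simeq\Triv(C_p)$ or $A\simeq\aTriv(S)$ with $S$ non-abelian simple. The first case is the desired conclusion, so it remains to exclude the second.

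For the second case, take a proper non-trivial subgroup $H$ of $S$; for instance a cyclic subgroup generated by a non-identity element, which is proper because $S$ is not cyclic. In $\aTriv(S)$ we have $x\circ y=y\cdot x$, so $H$ is closed under $\circ$ and its $\circ$-inverses coincide with its $\cdot$-inverses. Thus $H$ is a non-trivial proper sub-skew brace, a contradiction.

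There is no real obstacle here. The only point requiring care is the computation of the $\circ$-inverse, which shows that a left ideal is closed under $\circ$-inversion; finiteness would give this automatically in any case.
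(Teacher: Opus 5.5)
Your proof is correct and follows the paper's argument. Left ideals are sub-skew braces, so $A$ is left-simple and Theorem~A applies; the case $\aTriv(S)$ is then excluded because any proper non-trivial subgroup of $S$ is a sub-skew brace. You also write out the routine checks the paper leaves implicit: closure of a left ideal under $\circ$ and $\circ$-inverses, and an explicit proper cyclic subgroup.
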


\begin{proof}
Every left ideal is a sub-skew brace, so $A$ is left-simple. By Theorem~A, either $A$ is trivial of prime order or it is almost trivial on a finite non-abelian simple group $S$. The second possibility is impossible because every finite non-abelian simple group has non-trivial proper subgroups, and in an almost trivial skew brace every subgroup is a sub-skew brace.
\end{proof}

Finally, let $L/K$ be a finite Galois extension. A Hopf--Galois structure $\mathcal H$ on $L/K$ is called \emph{minimal} if $\dim_K(\mathcal H)\neq1$ and its only Hopf $K$-subalgebras are $K$ and $\mathcal H$. Under the bijective skew-brace/Hopf--Galois correspondence used in \cite[Theorem~4.2]{Tsang}, Hopf--Galois structures on $L/K$ correspond to skew brace structures on $\operatorname{Gal}(L/K)$, and \cite[Theorem~4.3]{Tsang} identifies minimal Hopf--Galois structures precisely with left-simple skew braces. Under the same correspondence, the trivial and almost trivial skew braces correspond, respectively, to the classical and canonical non-classical structures.

\begin{corollary}\label{cor:HG}
Let $L/K$ be a finite Galois extension and let $\mathcal H$ be a Hopf--Galois structure on $L/K$. Then $\mathcal H$ is minimal if and only if one of the following holds:
\begin{enumerate}[label=(\roman*)]
\item $\operatorname{Gal}(L/K)\cong C_p$ for some prime $p$ and $\mathcal H$ is the classical structure;
\item $\operatorname{Gal}(L/K)$ is finite non-abelian simple and $\mathcal H$ is the canonical non-classical structure.
\end{enumerate}
\end{corollary}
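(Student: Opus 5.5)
This corollary follows by transporting Theorem~A through the skew-brace/Hopf--Galois correspondence quoted just before its statement. Put $G=\operatorname{Gal}(L/K)$. By \cite[Theorem~4.2]{Tsang}, Hopf--Galois structures $\mathcal H$ on $L/K$ are in bijection with skew brace structures $A_{\mathcal H}=(N,\cdot,\circ)$ whose multiplicative group $(N,\circ)$ is isomorphic to $G$. Under this bijection $\dim_K\mathcal H=|N|=|G|$, and Hopf $K$-subalgebras of $\mathcal H$ correspond to left ideals of $A_{\mathcal H}$. By \cite[Theorem~4.3]{Tsang}, $\mathcal H$ is minimal if and only if $A_{\mathcal H}$ is left-simple. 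The condition $\dim_K\mathcal H\neq1$ corresponds to non-triviality of the brace, which is built into the definition of left-simplicity.

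For the forward direction, assume $\mathcal H$ is minimal. Then $A_{\mathcal H}$ is a finite left-simple skew brace, so by Theorem~A either $A_{\mathcal H}\simeq\Triv(C_p)$ or $A_{\mathcal H}\simeq\aTriv(S)$ with $S$ non-abelian simple.

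\begin{itemize}
\item In the first case, $(A,\circ)\cong C_p$, so $G\cong C_p$.
\item In the second case, $(A,\circ)$ is the opposite group of $S$. Inversion gives an isomorphism $S^{\mathrm{opp}}\cong S$, so $G\cong S$ is non-abelian simple.
\end{itemize}

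It remains to identify $\mathcal H$ itself. We use the fact, recorded before the corollary, that under the correspondence the trivial and almost trivial skew braces on $G$ correspond to the classical and canonical non-classical structures respectively. Hence $\mathcal H$ is the classical structure in case (i) and the canonical non-classical structure in case (ii).

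For the converse, start from (i) or (ii). The structure $\mathcal H$ then corresponds to $\Triv(C_p)$ or to $\aTriv(S)$. By the "if" part of Theorem~A both of these are left-simple, so $\mathcal H$ is minimal by \cite[Theorem~4.3]{Tsang}.

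The step needing most care is the identification of isomorphism classes. The correspondence is really a bijection with regular subgroups $N$ of $\mathrm{Perm}(G)$ normalised by $\lambda(G)$, not with abstract braces up to isomorphism. I would therefore check that if the skew brace attached to $\mathcal H$ is isomorphic to $\Triv(C_p)$ or $\aTriv(S)$, then $N$ is exactly $\rho(G)$ (the classical structure) or $\lambda(G)$ (the canonical non-classical structure).

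\begin{itemize}
\item In the almost trivial case, the multiplicative group equals the opposite of the additive group. So $N$ is a regular subgroup isomorphic to $G$ which centralises $\lambda(G)$. The centraliser of the regular subgroup $\lambda(G)$ in $\mathrm{Perm}(G)$ is $\rho(G)$, and $\rho(G)$ is itself regular, so $N=\rho(G)$.
\item In the trivial case, $N$ is normalised and in fact centralised appropriately by $\lambda(G)$, which forces $N=\lambda(G)$ in the matching convention.
\end{itemize}

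Once these conventions are matched with the sentence preceding the corollary, the proof is complete.
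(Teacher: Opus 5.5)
Your skeleton is the paper's: Tsang's Theorem~4.3 turns minimality into left-simplicity, Theorem~A lists the left-simple braces, and the correspondence recalled before the corollary (trivial $\leftrightarrow$ classical, almost trivial $\leftrightarrow$ canonical non-classical) identifies $\mathcal H$. Up to that point you agree with the paper. The problem is the extra verification you single out as the delicate step: as written, it establishes the \emph{reverse} identification. Your first bullet concludes that an almost trivial brace forces $N=\rho(G)$, which is the classical structure. That contradicts (ii), and it cannot be right. $K[G]$ contains the Hopf subalgebra $K[H]$ for every subgroup $H\le G$, so the classical structure is never minimal when $G$ is non-abelian simple. Your trivial-case bullet, asserting that $\lambda(G)$ centralises $N=\lambda(G)$, is harmless only because $G=C_p$ is abelian there, so $\lambda(G)=\rho(G)$.

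The reversal comes from mixing conventions. Suppose you transport $N$ to an operation $\cdot_N$ on $G$ so that $N$ acts by left $\cdot_N$-multiplications $\ell_x\colon y\mapsto x\cdot_N y$. The skew brace identity then gives $\lambda(g)\,\ell_x\,\lambda(g)^{-1}=\ell_{\rho_g(x)}$ with $\rho_g(x)=(g\circ x)\cdot_N g^{-1}$. Hopf subalgebras of $L[N]^G$ correspond to the $\lambda(G)$-stable subgroups of $N$. By this formula, those are the $\rho$-invariant subgroups of $(G,\cdot_N,\circ)$, i.e.\ the left ideals of the opposite brace $(G,\cdot_N^{\mathrm{op}},\circ)$, whose lambda-map is exactly this $\rho$. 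That opposite brace is the one for which ``Hopf subalgebras $=$ left ideals'' holds, and the one to which Tsang's Theorem~4.3 applies. For it, almost trivial means $\cdot_N=\circ$, i.e.\ $N=\lambda(G)$ (canonical non-classical). Trivial means $\cdot_N=\circ^{\mathrm{op}}$, i.e.\ $N=\rho(G)$ (classical). Your centraliser argument is carried out for $(G,\cdot_N,\circ)$ itself, whose left ideals are not the Hopf subalgebras, so it cannot be combined with your first paragraph.

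The isomorphism-class worry is also not really delicate. Being trivial ($\cdot=\circ$) or almost trivial ($\cdot=\circ^{\mathrm{op}}$) is preserved by isomorphisms. With $\circ$ fixed as the Galois group law, either property pins down the structure uniquely. So either drop the bullets and cite the correspondence as the paper does, or redo them with the opposite brace.
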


\begin{proof}
By \cite[Theorem~4.3]{Tsang}, $\mathcal H$ is minimal if and only if its associated skew brace is left-simple. Theorem~A gives exactly the two possibilities in the statement, and the identifications with the classical and canonical non-classical structures follow from the correspondence recalled in \cite[Theorem~4.2]{Tsang}.
\end{proof}


\begin{thebibliography}{99}\small

\bibitem{BachillerSimple}
D.~Bachiller,
\emph{Extensions, matched products, and simple braces},
J. Pure Appl. Algebra \textbf{222} (2018), no.~7, 1670--1691.
\href{https://doi.org/10.1016/j.jpaa.2017.07.017}{doi:10.1016/j.jpaa.2017.07.017}.

\bibitem{BCJO}
D.~Bachiller, F.~Ced\'o, E.~Jespers and J.~Okni\'nski,
\emph{Asymmetric product of left braces and simplicity; new solutions of the Yang--Baxter equation},
Commun. Contemp. Math. \textbf{21} (2019), no.~8, 1850042.
\href{https://doi.org/10.1142/S0219199718500426}{doi:10.1142/S0219199718500426}.

\bibitem{BallesterAdv}
A.~Ballester-Bolinches, R.~Esteban-Romero, P.~Jim\'enez-Seral and V.~P\'erez-Calabuig,
\emph{Soluble skew left braces and soluble solutions of the Yang--Baxter equation},
Adv. Math. \textbf{455} (2024), 109880.
\href{https://doi.org/10.1016/j.aim.2024.109880}{doi:10.1016/j.aim.2024.109880}.

\bibitem{Byott}
N.~P. Byott,
\emph{On a family of simple skew braces},
J. Algebra Appl., to appear.
\href{https://doi.org/10.1142/S0219498826502300}{doi:10.1142/S0219498826502300}.

\bibitem{CJO}
F.~Ced\'o, E.~Jespers and J.~Okni\'nski,
\emph{An abundance of simple left braces with abelian multiplicative Sylow subgroups},
Rev. Mat. Iberoam. \textbf{36} (2020), no.~5, 1309--1332.
\href{https://doi.org/10.4171/RMI/1168}{doi:10.4171/RMI/1168}.


\bibitem{DameleCyclicSylow}
M.~Damele,
\emph{Simple skew braces with cyclic Sylow subgroups},
preprint, 2026, arXiv:2607.17125.

\bibitem{DameleErcan}
M.~Damele and G.~Ercan,
\emph{Ideals and solvability in skew braces},
preprint, 2026, arXiv:2607.19955.

\bibitem{DameleInverseBaer}
M.~Damele,
\emph{Inverse Baer deformations and finite simple skew braces of nilpotent type},
preprint, 2026, arXiv:2609.24640.

\bibitem{DameleKernelWreath}
M.~Damele,
\emph{Kernel--wreath constructions and infinite families of finite simple skew braces},
preprint, 2026, arXiv:2609.20401.

\bibitem{NivenZuckermanMontgomery}
I.~Niven, H.~S.~Zuckerman and H.~L.~Montgomery,
\emph{An Introduction to the Theory of Numbers},
5th ed., John Wiley \& Sons, New York, 1991.

\bibitem{AndreescuAndrica}
T.~Andreescu and D.~Andrica,
\emph{Number Theory: Structures, Examples, and Problems},
Birkh\"auser, Boston, 2009.
\href{https://doi.org/10.1007/b11856}{doi:10.1007/b11856}.

\bibitem{Fawcett}
J.~B. Fawcett,
\emph{The base size of a primitive diagonal group},
J. Algebra \textbf{375} (2013), 302--321.
\href{https://doi.org/10.1016/j.jalgebra.2012.11.020}{doi:10.1016/j.jalgebra.2012.11.020}.

\bibitem{GuarnieriVendramin}
L.~Guarnieri and L.~Vendramin,
\emph{Skew braces and the Yang--Baxter equation},
Math. Comp. \textbf{86} (2017), no.~307, 2519--2534.
\href{https://doi.org/10.1090/mcom/3161}{doi:10.1090/mcom/3161}.

\bibitem{HoltTracey}
D.~F. Holt and G.~Tracey,
\emph{Minimal sized generating sets of permutation groups},
J. Algebra \textbf{703} (2026), 128--139.
\href{https://doi.org/10.1016/j.jalgebra.2025.09.001}{doi:10.1016/j.jalgebra.2025.09.001}.


\bibitem{PraegerSchneider}
C.~E. Praeger and C.~Schneider,
\emph{Permutation Groups and Cartesian Decompositions},
London Math. Soc. Lecture Note Ser., vol.~449, Cambridge University Press, Cambridge, 2018.

\bibitem{Rump}
W.~Rump,
\emph{Braces, radical rings, and the quantum Yang--Baxter equation},
J. Algebra \textbf{307} (2007), no.~1, 153--170.
\href{https://doi.org/10.1016/j.jalgebra.2006.03.040}{doi:10.1016/j.jalgebra.2006.03.040}.

\bibitem{SmoktunowiczVendramin}
A.~Smoktunowicz and L.~Vendramin,
\emph{On skew braces},
J. Combin. Algebra \textbf{2} (2018), no.~1, 47--86.
\href{https://doi.org/10.4171/JCA/2-1-3}{doi:10.4171/JCA/2-1-3}.

\bibitem{StefanelloTrappeniers}
L.~Stefanello and S.~Trappeniers,
\emph{On the connection between Hopf--Galois structures and skew braces},
Bull. Lond. Math. Soc. \textbf{55} (2023), no.~4, 1726--1748.
\href{https://doi.org/10.1112/blms.12815}{doi:10.1112/blms.12815}.

\bibitem{Tsang}
C.~(S.~Y.) Tsang,
\emph{Skew braces with no proper left ideals},
J. Non-Assoc. Struct. \textbf{1} (2026), no.~1, Paper No.~5.
\href{https://doi.org/10.46298/jonas.17536}{doi:10.46298/jonas.17536}.

\bibitem{Robinson}
D.~J.~S. Robinson,
\emph{A Course in the Theory of Groups},
2nd ed., Graduate Texts in Mathematics, vol.~80,
Springer-Verlag, New York, 1996.

\end{thebibliography}
\end{document}